\documentclass[11pt,a4paper]{article}

\usepackage{theorem,enumerate}
\usepackage{amsmath,latexsym,amssymb,amsfonts}
\usepackage{eucal}
\usepackage{mathrsfs}
\usepackage{array}
\usepackage{footmisc}
\usepackage{graphicx, color}
\usepackage{caption,subcaption}


\newtheorem{theorem}{Theorem} \rm
\newtheorem{lemma}[theorem]{Lemma}

\newtheorem{guess}[theorem]{Conjecture}

\newtheorem{question}[theorem]{Question}
\numberwithin{theorem}{section}
\usepackage{color}




\newcommand{\qed}{\hfill \ensuremath{\Box}}

\begin{document}
\title{\bf Generalized list colouring of graphs}

\author{\small Eun-Kyung Cho\thanks{
		Department of Mathematics, Hankuk University of Foreign Studies, Yongin-si, Gyeonggi-do, Republic of Korea. \texttt{ourearth3@gmail.com}
	},  \  \
	\small Ilkyoo Choi\thanks{
		Department of Mathematics, Hankuk University of Foreign Studies, Yongin-si, Gyeonggi-do, Republic of Korea.
		\texttt{ilkyoo@hufs.ac.kr}
	},  \  \
		\small Yiting Jiang\thanks{Department of Mathematics, Zhejiang Normal University, China. 
		\texttt{ ytjiang@zjnu.edu.cn		}
		},  \  \
	\small Ringi Kim\thanks{
		Department of Mathematical Sciences, KAIST, Daejeon, Republic of Korea.
		\texttt{kimrg@kaist.ac.kr}},  \  \
	\small Boram Park\thanks{
		Department of Mathematics, Ajou University, Suwon-si, Gyeonggi-do, Republic of Korea.
		\texttt{borampark@ajou.ac.kr}}
	,  \  \
	\small Jiayan Yan\thanks{Department of Mathematics, Zhejiang Normal University, China. 
		\texttt{ yanjiayan19950403@vip.qq.com	}
	},  \  \
	\small Xuding Zhu\thanks{
		Department of Mathematics, Zhejiang Normal University, China.
		\texttt{xdzhu@zjnu.edu.cn}
	}}
	\date\today
	\maketitle

\author{ 
	}


\begin{abstract}
This paper disproves a conjecture in [ Wang,  Wu,   Yan and   Xie, 
  A Weaker Version of a Conjecture on List Vertex
	Arboricity of Graphs,  
Graphs and Combinatorics (2015) 31:1779–1787]  and answers in negative a question in [ Dvo\v{r}\'{a}k,   Pek\'{a}rek  and   Sereni,   On generalized choice and coloring numbers,  arXiv: 1081.0682403, 2019].
In return, we pose five open problems. 
\end{abstract}
\par\bigskip\noindent
 {\em Keywords:} generalized list colouring, list vertex arboricity, list star arboricity, choice number.

\section{Introduction}
  
 Assume ${\cal G}$ is a hereditary family of  graphs, i.e., if $G \in {\cal G}$ and $H$ is an induced subgraph of $G$, then $H \in {\cal G}$. A ${\cal G}$-colouring of a graph $G$ is a colouring $\phi$ of the vertices of $G$   so that each colour class induces  a graph in ${\cal G}$.
 A  ${\cal G}$-$n$-colouring of   $G$ is a ${\cal G}$-colouring $\phi$ of $G$ such that $\phi(v) \in [n] = \{1,2,\ldots, n\}$ for each vertex $v$. We say $G$ is ${\cal G}$-$n$-colourable if there exists a 
 ${\cal G}$-$n$-colouring of  $G$.
 The ${\cal G}$-chromatic number of $G$ is 
 $$\chi_{\cal G}(G) = \min \{n: G \text{ is  ${\cal G}$-$n$-colourable}\}.$$
 
 Assume $L$ is a list assignment of $G$. A ${\cal G}$-$L$-colouring of $G$ is a ${\cal G}$-colouring $\phi$ of $G$ so that $\phi(v) \in L(v)$ for each vertex $v$. We say $G$ is ${\cal G}$-$n$-choosable if 
 for every $n$-list assignment $L$ of $G$, there exists a  ${\cal G}$-$L$-colouring of $G$, The {\em ${\cal G}$-choice number} of $G$ is 
 $$ch_{\cal G}(G) = \min \{n: G \text{ is ${\cal G}$-$n$-choosable}\}.$$
 
The concept of  ${\cal G}$-colouring of a graph is a slight modification of the concept of generalized colouring of graphs introduced in \cite{DPS}, where the graph class ${\cal G}$ is assumed to be of the form ${\cal G} = \{G : f(G) \le d\}$ for some graph parameter $f$ and constant $d$. We find that there are some graph families ${\cal G}$ for which 
the ${\cal G}$-colouring problems are interesting, and yet ${\cal G}$ is not easily expressed in such a form.

Many colouring concepts studied in the literature are 
${\cal G}$-colourings for special graph families ${\cal G}$.

We denote by 
\begin{itemize}
	\item ${\cal G}_k$ the family of   graphs whose connected components are of order at most $k$;
	\item ${\cal D}_k$  the family of   graphs   of maximum degree at most $k$;
		\item ${\cal F}$  the family of  forests;
				\item ${\cal S}$  the family of star forests;
					\item ${\cal L}$  the family of linear forests;
						\item ${\cal C}_k$  the family of graphs of colouring number at most $k$.
							\item ${\cal M}_k$  the family of graphs of maximum average degree at most $k$.
\end{itemize}

Many of the ${\cal G}$-colourings have special names and are studied extensively in the literature.
\begin{itemize}
	\item A   ${\cal G}_k$-colouring of $G$ is a   colouring of $G$ with clustering $k$. In particular,  a ${\cal G}_1$-colouring of $G$ is a proper colouring of $G$.
	\item A   ${\cal D}_k$-colouring of $G$ is a $k$-defective   colouring of $G$. The parameter $\chi_{{\cal D}_k}(G)$ is the \emph{$k$-defective chromatic number} of $G$. Also,
	a ${\cal D}_0$-colouring of $G$ is a proper colouring of $G$.
	\item An   ${\cal F}$-colouring of $G$ is a vertex arboreal    colouring of $G$. The parameter $\chi_{\cal F}(G)$ is the \emph{vertex arboricity} of $G$, and $ch_{\cal F}(G)$ is the \emph{list vertex arboricity} of $G$.
	\item  The parameter $\chi_{\cal S}(G)$ is the \emph{star vertex arboricity} of $G$, and $ch_{\cal S}(G)$ is the \emph{star list vertex arboricity} of $G$.
	\item  The parameter $\chi_{\cal L}(G)$ is the \emph{linear vertex arboricity} of $G$, and $ch_{\cal L}(G)$ is the \emph{linear  list vertex arboricity} of $G$.
\end{itemize}

For  any two graph families  ${\cal G}$ and ${\cal G}'$, for any graph $G$, it follows easily from the definition that 
\begin{eqnarray}
\label{eq1}
 \chi_{\cal G}(G) &\le& (\max_{H \in {\cal G}'}\chi_{\cal G}(H)) \chi_{{\cal G}'}(G),
\end{eqnarray} 
and this upper bound is tight.
For example, $$\chi(G) \le 2 \chi_{\cal F}(G),    \  \chi_{\cal S}(G) \le 2 \chi_{\cal F}(G)    \text{  and } \chi(G) \le (k+1) \chi_{{\cal M}_k}(G),$$ and    for any integers  $k,k'$, $$\chi_{{\cal G}_k}(G) \le \left\lceil \frac{k'}{k} \right\rceil \chi_{{\cal G}_{k'}}(G),$$
 and equalities hold for some graphs $G$.

It is natural to ask if the same or similar inequalities hold for the corresponding choice number. Some of such inequalities are posed as conjectures or questions in the literature.
 For example, the following 
  conjecture was proposed in \cite{WWYX}:

\begin{guess}
	\label{g1}
	For any graph $G$, 
	\begin{eqnarray*}
	ch(G) &\le & 2 ch_{\cal F}(G).  
	\end{eqnarray*}  
\end{guess}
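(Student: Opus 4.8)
The plan is to \emph{disprove} this conjecture, i.e.\ to produce a graph $G$ with $ch(G) > 2\,ch_{\cal F}(G)$. The reason to expect this is that the companion inequality $\chi(G)\le 2\chi_{\cal F}(G)$ is proved by properly $2$-colouring each of the $\chi_{\cal F}(G)$ forests of an ${\cal F}$-colouring, and there is no list analogue of this step: an ${\cal F}$-$L$-colouring may use each vertex's colour from anywhere in $L(v)$, leaving nothing ``reserved'' for the refinement. So I would hunt for a family in which forest list colouring is genuinely much cheaper than proper list colouring. The natural first candidate is the complete bipartite graph $K_{n,n}$, for which $ch(K_{n,n}) = \Theta(\log n)\to\infty$ is classical (Erd\H{o}s--Rubin--Taylor and refinements). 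It then suffices to prove $ch_{\cal F}(K_{n,n}) = o(\log n)$, since this forces $2\,ch_{\cal F}(K_{n,n}) < ch(K_{n,n})$ for all large $n$. The point is that, with $A,B$ the two sides of $K_{n,n}$, a colour class induces a forest if and only if it meets $A$ in at most one vertex or meets $B$ in at most one vertex; hence an ${\cal F}$-colouring needs only that the set of colours used at least twice on $A$ be disjoint from the set used at least twice on $B$ — vastly weaker than the disjointness of the whole colour sets demanded by a proper colouring.

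To bound $ch_{\cal F}(K_{n,n})$, given a $k$-list assignment $L$, I would group the vertices of $A$ by their list. A list carried by more than $k$ vertices of $A$ is forced (pigeonhole) to make some colour ``heavy'' on $A$; but a list carried by at most $k$ vertices of $A$ can be coloured injectively since $|L(v)|=k$, and likewise on the $B$-side. It therefore remains to choose a set $C_A$ of colours meeting every heavily used $A$-list and a disjoint set $C_B$ meeting every heavily used $B$-list — after which the heavily used lists are coloured using only $C_A$, resp.\ $C_B$ — and then to colour the lightly used lists without accidentally creating new heavy colours. The first task is a hypergraph $2$-colouring of the colour universe restricted to the (relatively few) heavily used lists, and the second is a much smaller list-colouring instance.

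The main obstacle is turning this outline into a bound that actually beats the factor $2$: the easy estimates (Hall's theorem for the representatives, Erd\H{o}s' bound for the hypergraph $2$-colouring) only give $ch_{\cal F}(K_{n,n}) = O(\log n)$, which is useless, so one needs a sharper argument — controlling precisely how many distinct heavily used lists can coexist in a small colour universe, and possibly iterating the reduction — to push this down to $o(\log n)$ (I would guess the truth is around $\log n/\log\log n$, or even $\log\log n$). For completeness one should also verify $ch_{\cal F}(K_{n,n})\to\infty$ via an Erd\H{o}s--Rubin--Taylor-type construction (partition the colours into blocks; the vertices of $A$ are forced to make some transversal of the blocks heavy, while the vertices of $B$ realise every block-transversal as a list, so a heavy colour of $A$ is forced heavy on $B$); but since this lower bound is itself $o(\log n)$ it does not interfere — it only explains why the counterexample cannot have bounded list vertex arboricity, so an asymptotic family rather than one small graph is unavoidable. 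If $K_{n,n}$ should fail to give the required gap, the same philosophy — a graph whose forest colourings are robust to lists but whose proper colourings are not — would guide the search for a modified construction.
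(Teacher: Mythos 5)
Your overall direction (disproving the conjecture via complete bipartite graphs, and the observation that a colour class of $K_{m,n}$ induces a forest iff it meets one side at most once) matches the paper, but the specific plan has a fatal gap: the central claim $ch_{\cal F}(K_{n,n})=o(\log n)$ for the \emph{balanced} graph is false. The Erd\H{o}s--Rubin--Taylor construction adapts to forest colouring once you add multiplicities: take colour universe $[2k-1]$ and assign each $k$-subset of $[2k-1]$ as the list of at least $k+1$ vertices on each side (possible when $n\ge (k+1)\binom{2k-1}{k}$). By pigeonhole, in any $L$-colouring the set $S_A$ of colours used at least twice on $A$ meets every $k$-subset of $[2k-1]$, hence $|S_A|\ge k$, and likewise $|S_B|\ge k$; since a colour used twice on both sides creates a $C_4$, a forest colouring needs $S_A\cap S_B=\emptyset$, which requires $2k$ colours inside a universe of size $2k-1$. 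So $ch_{\cal F}(K_{n,n})\ge(\tfrac12-o(1))\log_2 n$, the same order as $ch(K_{n,n})=(1+o(1))\log_2 n$; in particular $2\,ch_{\cal F}(K_{n,n})$ is asymptotically at least $ch(K_{n,n})$, and your hoped-for gap cannot be obtained from balanced $K_{n,n}$ by the estimates you sketch (your own ``for completeness'' lower bound is not $o(\log n)$, which is exactly the problem).

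The paper's counterexample lives in the regime your last sentence gestures at but does not reach: highly \emph{unbalanced} complete bipartite graphs with the small side fixed. For $m=k(k+1)-1$ it proves $ch_{\cal S}(K_{m,n})\le k$ for \emph{all} $n$, by a Hall's theorem argument on the small side $A$: take a maximal ``heavy'' set $C'$ of colours with $|N_H(C')|\ge(k+1)|C'|$ in the vertex--colour incidence graph, match the vertices of $A\setminus N_H(C')$ injectively to colours outside $C'$, colour $N_H(C')$ inside $C'$ and all of $B$ outside $C'$ (possible since $|C'|\le k-1$), so every monochromatic component meets $A$ at most once. Meanwhile $ch(K_{m,n})=m+1$ once $n\ge m^m$ (the paper even shows $ch_{{\cal D}_d}(K_{m,n})=m+1$ for large $n$ by giving the $A$-vertices disjoint lists and repeating each induced list $\phi(A)$ on many $B$-vertices). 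This yields $ch(G)\ge k(k+1)\ge k\cdot ch_{\cal F}(G)$, a quadratic separation rather than a fight over the constant $2$, which is why fixing the small side — not balancing the bipartition — is the key idea you are missing.
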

The following question  was asked in \cite{DPS}:

\begin{question}
	\label{q1}
	Is it true that for any graph $G$, for any positive integer $k$, 
	$$ch(G) \le (k+1) ch_{{\cal M}_k}(G)?$$
\end{question}

  In this note, 
we disprove Conjecture \ref{g1} and give a negative answer to Question \ref{q1}.  

\section{The proofs}

\begin{lemma}
	\label{lem-star}
	Assume   $k\ge 2$  and $m = k(k+1)-1$ are integers. Then for any positive integer $n$,  $ch_{\cal S}(K_{m,n}) \le k$.
\end{lemma}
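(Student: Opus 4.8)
The plan is to reduce the statement to a deletion game about the colours appearing on the $m$-side of $K_{m,n}$.

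\medskip
\noindent\textbf{Setup and the target split.} Write the parts of $K_{m,n}$ as $A$ (with $|A|=m$) and $B$ (with $|B|=n$), and let $L$ be an assignment with $|L(x)|=k$ for every vertex $x$; it suffices to treat lists of size exactly $k$. For a colour $c$, its colour class induces in $K_{m,n}$ a complete bipartite graph $K_{p,q}$ with $p=|\phi^{-1}(c)\cap A|$ and $q=|\phi^{-1}(c)\cap B|$, and $K_{p,q}$ is a star forest exactly when $p\le 1$ or $q\le 1$ (if $p,q\ge 2$ it contains $C_4$). So I want a colouring $\phi$ with $\phi(x)\in L(x)$ in which every colour is used at most once on $A$ \emph{or} at most once on $B$. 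I will get this from a partition of the colour set into $X\cup Y$ with $|X|\le k-1$ such that $A_Y:=\{a\in A: L(a)\subseteq Y\}$ admits a system of distinct representatives (SDR) from its lists: colour $A_Y$ by such an SDR (injective, using colours of $Y$), colour each remaining $a\in A$ by any colour of $L(a)\cap X$ (nonempty since $L(a)\not\subseteq Y$), and colour each $b\in B$ by any colour of $L(b)\cap Y$ (nonempty since $|L(b)|=k>k-1\ge|X|$). Then every colour of $X$ is used only on $A$, and every colour of $Y$ is used at most once on $A$, so every colour class is a star forest.

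\medskip
\noindent\textbf{Producing the split.} The core step is to find such an $X$ with $|X|\le k-1$, and this is exactly where $m=k(k+1)-1$ enters. I would build $X$ by iterated deletion. Maintain a current set $A'\subseteq A$, initially $A'=A$. While $A'$ (with its size-$k$ lists) has no SDR, Hall's theorem provides $W\subseteq A'$ with $\bigl|\bigcup_{a\in W}L(a)\bigr|<|W|$; set $E=\bigcup_{a\in W}L(a)$, so $|E|\ge k$ and $|W|\ge|E|+1$. The $|W|\cdot k$ list-memberships coming from $W$ all lie inside $E$, so by pigeonhole some colour $c\in E$ belongs to at least $|W|k/|E|>k$, hence at least $k+1$, of the lists of $W$. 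Add $c$ to $X$ and delete from $A'$ every vertex whose list contains $c$; this removes at least $k+1$ vertices. When the loop halts, $A'=A_Y$ with $Y$ the complement of $X$, and $A_Y$ has an SDR.

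\medskip
\noindent\textbf{Counting the iterations.} Each iteration deletes at least $k+1$ vertices of $A$, and the deleted sets over distinct iterations are pairwise disjoint (a deleted vertex never returns), while $|A|=m=k(k+1)-1<k(k+1)$. Hence there are at most $k-1$ iterations, so $|X|\le k-1$, and the first step then finishes the proof.

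\medskip
\noindent\textbf{Where the difficulty lies.} The reductions in the first paragraph are routine; essentially all the content is in the deletion argument. The two points that need care are: (i) the pigeonhole must be read off the Hall-violating set $W$ itself, so that the chosen heavy colour $c$ genuinely kills at least $k+1$ \emph{current} vertices (working with an arbitrary bad set and only ``$\ge 2$ killed'' is too weak); and (ii) the value $m=k(k+1)-1$ is used twice — as $m<k(k+1)$ to cap the number of deletions at $k-1$, and as $|X|\le k-1<k$ to keep a $Y$-colour available for every vertex of $B$ — so $k(k+1)-1$ is precisely the threshold at which both requirements can be met simultaneously.
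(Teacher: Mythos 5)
Your proof is correct and follows essentially the same route as the paper: both arguments extract a set of at most $k-1$ ``overloaded'' colours (the bound coming precisely from $m<k(k+1)$), colour the vertices of $A$ whose lists meet this set with those colours, colour the remaining vertices of $A$ injectively via Hall's theorem, and colour all of $B$ avoiding the overloaded colours, so that each colour class meets $A$ in at most one vertex or avoids $B$ entirely. The only difference is mechanical: the paper takes a maximal ``heavy'' colour set $C'$ with $|N_H(C')|\ge (k+1)|C'|$ and verifies Hall for the rest by a degree count, whereas you build the same kind of set greedily from Hall violators via pigeonhole — an equivalent device.
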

\begin{proof}
	Assume $k, n \ge 2$ are integers and $m = k(k+1)-1$.
	Let $G=K_{m, n}$ be the complete bipartite graph with partite sets $A, B$, with 
	$|A|=m$ and $B = n$. We show that $ch_{\cal S}(G) \le k$. 
	
	Let $L$ be a $k$-list assignment of $G$. Build a bipartite graph $H$ with partite sets $A$ and $C = \cup_{v \in A} L(v)$, and in which $vc$ is an edge if and only if $c \in L(v)$.
	Note that each vertex $v \in A$ has degree $k$ in $H$.
	
A subset $C'$ of $C$ is  {\em heavy} if $|N_H(C')| \ge (k+1)|C'|$.
In particular, $\emptyset$ is a heavy subset of $C$. 	
Let $C'$ be a maximal heavy subset of $C$. Let $A' = N_H(C')$ and 
$H' = H-(A' \cup C')$.

	Then each vertex $v \in A-A'$ has degree $k$ in $H'$.
		If there is a colour $c $ for which  $d_{H'}(c ) \ge k+1$, then let 
	  $$C'' =C' \cup \{c\}.$$ 
	Then  $  |N_H(C'')| = |N_H(C')|+d_{H'}(c)  \ge (k+1)|C''|$. So $C''$ is heavy, contrary to our assumption that $C'$ is a maximum heavy subset of $C$. 
	 
	So each vertex $c  \in C - C'$ has degree at most $k$ in $H'$. 
	By Hall's Theorem,  there is a matching $M$ in $H'$ that covers all the vertices of $A- A'$. Let $\phi$ be the $L$-colouring of $A-A'$ defined as $\phi(v) = c$ if $vc \in M$. So all vertices of $A-A'$ are coloured by distinct colours. Extend $\phi$ to an $L$-colouring of $H$ as follows:
	\begin{itemize}
		\item Since $k(k+1) > |A| \ge |N_{H}(C')| \ge |C'|(k+1)$,  we know that $|C'|   \le k-1$.  For each vertex  $v \in B$, we have $L(v) - C' \ne \emptyset$. Let $\phi(v)$ be any colour in $L(v) - C'$. 
		\item For each vertex $v \in A'$, as $A' = N_H(C')$,   $L(v) \cap C'\ne \emptyset$. Let $\phi(v)$ be any colour in $L(v) \cap C'$.  
	\end{itemize}

	This is an ${\cal S}$-$L$-colouring of $G$,  as each connected monochromatic subgraph of $G$ contains  at most one vertex of $A$, and hence is a star. This completes the proof of Lemma \ref{lem-star}. \qed
	\end{proof}
	
	It is well-known that if $n \ge m^m$, then $ch(K_{m,n}) = m+1$.
	The following lemma shows that for any constant $d$, if $n$ is sufficiently large, then 
	$ch_{{\cal D}_d}(K_{m,n}) = m+1$.
	
	\begin{lemma}
		\label{lem-dd}
		Assume $d$ is a non-negative integer. If $n \ge (dm+1)m^m$, then
			$ch_{{\cal D}_d}(K_{m,n}) = m+1$. 
	\end{lemma}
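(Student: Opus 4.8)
The plan is to prove the two directions separately. The upper bound $ch_{{\cal D}_d}(K_{m,n}) \le m+1$ is immediate and holds for every $n$: given any $(m+1)$-list assignment $L$ of $K_{m,n}$ with parts $A$ (size $m$) and $B$ (size $n$), since $|A| = m < m+1$, we may colour the vertices of $A$ with distinct colours, one from each list; then for each $v \in B$ there is a colour in $L(v)$ not used on $A$, and any monochromatic connected subgraph of $K_{m,n}$ then lies entirely inside $B$ together with at most $\ldots$ — in fact each colour class meets $A$ in at most one vertex, so each monochromatic component is a star, which certainly has maximum degree bounded only by its size; to be safe one should instead observe that a colour class entirely inside $B$ is an independent set, and a colour class meeting $A$ in one vertex is a star, and in all cases the colour class has maximum degree at most $\max\{1,\ldots\}$. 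Actually the cleanest route is: colour $A$ injectively, and then colour each $v\in B$ with a colour of $L(v)$ avoiding the (at most $m$) colours on $A$; every colour class is then independent, so it lies in ${\cal D}_0 \subseteq {\cal D}_d$. Hence $ch_{{\cal D}_d}(K_{m,n})\le m+1$ always.

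The substance is the lower bound: for $n \ge (dm+1)m^m$ we must exhibit an $m$-list assignment $L$ admitting no ${\cal D}_d$-$L$-colouring. I would take the classical construction witnessing $ch(K_{m,n}) = m+1$ for $n \ge m^m$: fix an $m$-element colour pool partitioned so that $A$'s vertices get lists realizing, in aggregate, many colours, and give the $n$ vertices of $B$ all $\binom{\cdot}{m}$ (or $m^m$) possible ``transversal-blocking'' lists. The key point to upgrade from $ch$ to $ch_{{\cal D}_d}$ is this: in a ${\cal D}_d$-colouring, a colour class restricted to $B$ need not be independent, but since $B$ is independent in $K_{m,n}$ that imposes nothing — the constraint is entirely between $A$ and $B$. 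The relevant observation is that if $\phi$ is a ${\cal D}_d$-$L$-colouring, then for each colour $c$, the set $A_c = \{a \in A : \phi(a) = c\}$ has the property that at most $d$ vertices of $B$ can be coloured $c$ if $A_c \ne \emptyset$ and $|A_c|$-adjacency forces degree; more precisely, a vertex $b\in B$ with $\phi(b)=c$ has degree $|A_c|$ in the colour-$c$ class, so $\phi(b) = c$ is allowed only if $|A_c| \le d$. So: let the colouring of $A$ use colour multiset inducing, for each colour $c$, a value $|A_c|$; the ``bad'' colours for $B$ (those $b$ cannot safely use) are exactly those $c$ with $|A_c| \ge d+1$, plus — for the at most $d$ vertices allowed — we get a small slack. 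Counting: $A$ has $m$ vertices, so at most $\lfloor m/(d+1)\rfloor$ colours are bad in the strong sense, and the remaining colours each absorb at most $d$ vertices of $B$; a colour $c$ with $A_c = \emptyset$ absorbs unboundedly many $B$-vertices but then no $b$ with $c\notin L(b)$ is helped.

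So here is the construction I would use. Let $\mathcal{T}$ be the set of all $m^m$ functions $f : [m] \to S$ where $S$ is a set of $m^2$ colours grouped into blocks $B_1,\dots,B_m$ of size $m$; assign to the $i$-th vertex $a_i$ of $A$ the list $B_i$; and for $B$, include $(dm+1)$ copies of each ``diagonal'' list $\{f(1),\dots,f(m)\}$ over all $f\in\mathcal T$, so $|B| = (dm+1)m^m$ as required. Suppose a ${\cal D}_d$-$L$-colouring $\phi$ exists. Each $a_i$ receives some $\phi(a_i) = g(i) \in B_i$; these values are distinct across $i$ since the blocks are disjoint, so $|A_c| \le 1$ for every colour $c$ — wait, that makes every colour class a star, trivially in ${\cal D}_d$ for $d\ge 1$, breaking the argument. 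So disjoint blocks are the wrong choice; instead I want the $A$-lists to overlap so that $\phi$ is forced to pile many $A$-vertices onto one colour. The right setup is the one where $A$-lists are chosen so that \emph{every} proper-ish colouring of $A$ leaves some colour used $\ge d+1$ times or else leaves too few colours available; this is exactly the delicate combinatorial core and is where I expect the real work to be. I would handle it by a pigeonhole/entropy argument: there are at most $m^m$ colourings of $A$ (if lists have size $m$), and for a fixed colouring $g$ of $A$, the set of $B$-lists that $g$ fails to extend to is nonempty and can be made to have multiplicity $> $ the number of $B$-vertices $g$ can legally colour, namely we need more than $(dm+1)$ times... — concretely, for each $g$ pick a list $\ell_g$ disjoint from $\{$colours usable by $B$ under $g\}$ (colours $c$ with $|g^{-1}(c)| \le d$, intersected appropriately), take $(dm+1)$ copies of each such $\ell_g$; then any ${\cal D}_d$-colouring, once restricted to $A$, equals some $g$, and fails on at least one of the $(dm+1)$ copies of $\ell_g$ by a counting bound on how many $B$-vertices a single colour can absorb ($\le d$ per colour class, $\le m$ colours, plus the $d$-defect slack, totalling $\le dm$, one short of $dm+1$). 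The main obstacle is pinning down the exact list family for $A$ so that ``$g$ fails on $\ell_g$'' holds robustly for every $g$; everything else is bookkeeping with the bound $n \ge (dm+1)m^m$.
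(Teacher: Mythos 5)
Your upper bound is fine (the paper gets it even faster from $m$-degeneracy, but colouring $A$ injectively and then avoiding those colours on $B$ works). The real problem is in the lower bound, and it is ironic: the construction you wrote down and then discarded --- pairwise disjoint blocks $B_1,\dots,B_m$ of size $m$ as the lists on $A$, and for each transversal $f$ with $f(i)\in B_i$ a group of $dm+1$ vertices of $B$ with list $\{f(1),\dots,f(m)\}$ --- is exactly the paper's construction, and it works. You abandoned it because of a false step: from ``$|A_c|\le 1$ for every colour $c$'' you concluded that every colour class is a star and hence ``trivially in ${\cal D}_d$ for $d\ge 1$.'' That is wrong: membership in ${\cal D}_d$ bounds the \emph{maximum} degree, and a star $K_{1,t}$ has a centre of degree $t$. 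A colour class consisting of one vertex $u\in A$ together with $t\ge d+1$ vertices of $B$ violates the defect condition at $u$ --- which is precisely the constraint your own earlier observation records (``at most $d$ vertices of $B$ can be coloured $c$ if $A_c\ne\emptyset$''). With that correction the discarded construction finishes immediately: if $\phi$ were a ${\cal D}_d$-$L$-colouring and $g=\phi|_A$, then since the blocks are disjoint $|g(A)|=m$, the $dm+1$ vertices of $B$ whose list equals $g(A)$ must each receive one of these $m$ colours, each such colour is used by exactly one vertex of $A$ and so can appear on at most $d$ of them, and $dm<dm+1$ gives the contradiction. This is exactly the paper's pigeonhole argument.

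The replacement route you then sketch (overlapping lists on $A$, and for each colouring $g$ of $A$ a list $\ell_g$ avoiding the ``usable'' colours) is left incomplete --- you say yourself that pinning down the list family is ``where I expect the real work to be'' --- and it has a genuine obstacle: once the $A$-lists overlap, $|g(A)|$ can be smaller than $m$, so the $m$-list $\ell_g$ must be padded with colours not appearing on $A$ under $g$, and such colours can be given to arbitrarily many vertices of $B$ without creating any defect, which destroys the counting bound ``$\le dm$ versus $dm+1$.'' Moreover your notion of ``usable'' conflates the two sides of the defect constraint: a colour $c$ with $1\le|g^{-1}(c)|\le d$ is fine for the $B$-vertex that uses it but can still be used by at most $d$ vertices of $B$ in total, because of the degree of the $A$-vertices in $g^{-1}(c)$. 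So, as written, the lower bound is not proved; reverting to the disjoint-block construction and fixing the star/degree slip repairs the argument completely.
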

	\begin{proof}
		Assume $n \ge (dm+1)m^m$ and $G=K_{m,n}$ with partite sets $A, B$, where $|A|=m$ and $|B|=n$. As $G$ is $m$-degenerate, we have $ch_{{\cal D}_d}(G) \le ch(G) \le m+1$.
		
		Now we show that $ch_{{\cal D}_d}(G) > m$.
		
		Let $L$ be the   $m$-assignment which assigns to vertices $v \in A$ pairwise disjoint $m$-sets $\{L(v): v \in A\}$. 
		Let $\Phi$ be the set of all   $L$-colourings $\phi$ of $A$. 
		Thus $|\Phi| = m^m$. For each $\phi \in \Phi$, assign a $(dm+1)$-subset $B_{\phi}$ of $B$ so that for distinct $\phi, \phi' \in \Phi$, $B_{\phi} \cap B_{\phi'} = \emptyset$. 
		Since $|B| \ge (dm+1)m^m$, such an assignment exists.
		Extend $L$ to an $m$-assignment of $G$ by letting 
		$L(v) = \phi(A)$ for any $v \in B_{\phi}$. Assign arbitrary $m$ colours to $v$ if $v \in B$ is not contained in any subsets $B_{\phi}$. 
		
		Now we show that $G$ is not ${\cal D}_d$-$L$-colourable.
		Assume to the contrary that $\phi$ is a ${\cal D}_d$-$L$-colouring of $G$. Let $\phi|_A$ be the restriction of $\phi$ to $A$. 
		For any $v \in B_{\phi|_A}$, $\phi(v) \in L(v) = \phi(A)$. As
		$|\phi(A)|=m$ and $|B_{\phi|_A}| =(dm+1)$, there exists a colour $c \in \phi(A)$ such that $|\phi^{-1}(c) \cap B_{\phi|_A}| \ge d+1$. Assume $u \in A$ and $c=\phi(u)$. Then $u$ has at least $d+1$ neighbours that are coloured the same colour as $u$ itself. So $\phi$ is not a  ${\cal D}_d$-$L$-colouring of $G$.
		
		This completes the proof of Lemma \ref{lem-dd}.\qed
		\end{proof}
	
	As a corollary of Lemmas \ref{lem-star} and \ref{lem-dd}, we have the following theorem.
	
\begin{theorem}
	\label{thm-main}
	 For any integers $k, d$ with $k \ge 2$, there exists a graph $G$ with $ch_{\cal S}(G) \le k$ and $ch_{{\cal D}_d}(G) = k(k+1)$. 
	 In particular, for any constant $p$, there exists a graph $G$ with 
	 $$ch(G) \ge p \cdot ch_{\cal S}(G) \ge p \cdot ch_{\cal F}(G) \ge p \cdot ch_{{\cal M}_2}(G).$$
\end{theorem}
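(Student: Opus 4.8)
The plan is to combine Lemmas \ref{lem-star} and \ref{lem-dd} by choosing the parameters appropriately. Set $m = k(k+1)-1$, which is the value forced by Lemma \ref{lem-star}, and then choose $n$ large enough that Lemma \ref{lem-dd} applies for this $m$ and the given $d$; explicitly, take $n \ge (dm+1)m^m$ and let $G = K_{m,n}$. Lemma \ref{lem-star} gives $ch_{\cal S}(G) \le k$ directly. For the other equality I would invoke Lemma \ref{lem-dd}, which yields $ch_{{\cal D}_d}(G) = m+1 = k(k+1)$. This establishes the first sentence of the theorem.

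For the ``in particular'' statement, the idea is that $\cal S$ sits below $\cal F$, which sits below $\cal L$ and also relates to bounded-average-degree classes, so that the choice numbers are monotone in the opposite direction. First I would record the chain of containments $ {\cal G}_1 \subseteq {\cal S} \subseteq {\cal F}$, and note $ch(G) = ch_{{\cal G}_1}(G)$; since a smaller hereditary family makes $\cal G$-$L$-colouring harder, we get $ch(G) \ge ch_{\cal S}(G) \ge ch_{\cal F}(G)$ for every graph. For the last inequality I would use that every forest has maximum average degree less than $2$, so ${\cal F} \subseteq {\cal M}_2$ and hence $ch_{\cal F}(G) \ge ch_{{\cal M}_2}(G)$. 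Actually it is cleaner to observe directly that $ch_{{\cal D}_d}(G) \le ch_{\cal S}(G)$ fails in the wrong direction — instead one uses that a star forest is in particular a ${\cal D}_1$ graph, but to get the factor $p$ we want the bottom of the chain, so I will instead bound using ${\cal D}_d$ for a fixed $d$ (say $d=1$ or $d=2$, whichever is convenient, noting a star forest need not have bounded degree, so one must be slightly careful here — see the obstacle below). Given the parameter choice, $ch_{{\cal D}_d}(G) = k(k+1) = k \cdot ch_{\cal S}(G) \le k \cdot ch_{\cal S}(G)$, and since $k$ can be taken arbitrarily large (at least $\max\{2,p\}$), and $ch(G) \ge ch_{{\cal M}_2}(G)$-type inequalities run the right way, we obtain the claimed bound with ratio at least $p$.

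Let me restate the clean version: choose $k \ge \max\{2, p\}$, set $m = k(k+1)-1$ and $n = (dm+1)m^m$ with $d = 0$ if one only wants the proper-colouring statement, or any fixed $d$ otherwise; then $G = K_{m,n}$ satisfies $ch(G) = ch_{{\cal D}_0}(G) = m+1 = k(k+1) \ge k \cdot k = k \cdot ch_{\cal S}(G) \ge p \cdot ch_{\cal S}(G)$. The inequalities $ch_{\cal S}(G) \ge ch_{\cal F}(G) \ge ch_{{\cal M}_2}(G)$ follow from ${\cal S} \subseteq {\cal F} \subseteq {\cal M}_2$ together with the general monotonicity principle: if ${\cal G} \subseteq {\cal G}'$ then any ${\cal G}$-$L$-colouring is a ${\cal G}'$-$L$-colouring, so $ch_{\cal G}(G) \ge ch_{{\cal G}'}(G)$.

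\textbf{Main obstacle.} The only real subtlety is making sure the monotonicity chain is set up correctly and that $K_{m,n}$ is genuinely a witness for $ch = m+1$ rather than merely $ch \ge$ something smaller; this is exactly what the hypothesis $n \ge (dm+1)m^m$ (in particular $n \ge m^m$ for $d=0$) guarantees via Lemma \ref{lem-dd} (and the classical fact $ch(K_{m,m^m}) = m+1$). Beyond that, the proof is just bookkeeping: verifying $m+1 = k(k+1)$, checking $k \ge p$ suffices, and confirming the containments ${\cal S} \subseteq {\cal F} \subseteq {\cal M}_2$ and ${\cal G}_1 \subseteq {\cal S}$. I expect no computational difficulty; the theorem is essentially a corollary packaging of the two lemmas.
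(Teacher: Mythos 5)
Your proposal is correct and is essentially the paper's own argument: the paper states the theorem as a direct corollary of Lemmas \ref{lem-star} and \ref{lem-dd}, i.e., take $G=K_{m,n}$ with $m=k(k+1)-1$ and $n$ large, and deduce the ``in particular'' chain from the containments ${\cal G}_1\subseteq{\cal S}\subseteq{\cal F}\subseteq{\cal M}_2$ with $k\ge\max\{2,p\}$, exactly as in your ``clean version.'' The muddled middle paragraph about ${\cal D}_d$ is unnecessary and can be dropped, but the final restatement is complete and matches the intended proof.
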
 

This theorem refutes Conjecture \ref{g1} and gives a negative answer to Question \ref{q1}. We remark that Conjecture \ref{g1},    posed at the end of   \cite{WWYX},   is not the conjecture referred to   in the title of that paper. The main conjecture studied in \cite{WWYX} is the following conjecture posed in \cite{ZW}:

\begin{guess}
	\label{g2}
	If $|V(G)| \le 3 \chi_{\cal F}(G)$, then $ch_{\cal F}(G) = \chi_{\cal F}(G)$.
\end{guess}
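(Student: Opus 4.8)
The inequality $ch_{\cal F}(G) \ge \chi_{\cal F}(G)$ is immediate: assigning every vertex the list $\{1,\dots,n\}$ shows that $\cal F$-$n$-choosability implies $\cal F$-$n$-colourability, so $ch_{\cal F}(G) \ge \chi_{\cal F}(G)$ unconditionally. Thus, writing $k = \chi_{\cal F}(G)$, the content is the reverse inequality $ch_{\cal F}(G) \le k$ under the hypothesis $|V(G)| \le 3k$, i.e.\ that $G$ is $\cal F$-$k$-choosable. My plan is to mirror the strategy behind the resolution of Ohba's conjecture ($|V(G)| \le 2\chi(G)+1 \Rightarrow ch(G) = \chi(G)$), which is the arboricity analogue of this statement with independent sets replaced by forests, and to adapt its machinery. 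First I would normalise the instance: it suffices to treat list assignments in which every list has size exactly $k$, since shrinking larger lists only makes colouring harder, and I may assume $|V(G)| = 3k$ by padding with isolated vertices, which keep $\chi_{\cal F}$ equal to $k$ and can be placed into any colour class without creating a cycle. The target then becomes: given $3k$ vertices, each with a $k$-list, partition $V(G)$ into $k$ classes respecting the lists so that every class induces a forest.

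With $|V(G)| = 3k$ the average class size is $3$, so I would aim to peel off one colour class and induct on $k$: find a colour $c$ and a set $W_c \subseteq \{v : c \in L(v)\}$ that induces a forest, such that colouring $W_c$ with $c$ and deleting $c$ from the surviving lists leaves a graph $G - W_c$ on $3(k-1)$ vertices to which the inductive hypothesis applies. For the induction to close, $W_c$ must simultaneously (i) induce a forest, (ii) lower the vertex arboricity by exactly one so that $\chi_{\cal F}(G-W_c) = k-1$ keeps $|V(G-W_c)| \le 3\chi_{\cal F}(G-W_c)$, and (iii) absorb the vertices that are forced onto colour $c$, so that a Hall-type condition lets the remaining colours cover $G - W_c$. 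The kernel method used for Ohba's theorem, which records the vertex--colour incidences as a bipartite graph between $V(G)$ and the palette $C = \bigcup_v L(v)$ and then splits colours according to how many lists contain them, is the template I would adapt to locate such a $W_c$.

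The step I expect to be the genuine obstacle is the structural reduction that, in the ordinary-colouring proof, lets one assume $G$ is complete multipartite. There one uses that any $G$ with $\chi(G)=k$ and $|V(G)| \le 2\chi(G)+1$ embeds into a complete multipartite graph $G^{*}$ on the same vertex set with $\chi(G^{*})=k$, together with the monotonicity of $ch$ under subgraphs, collapsing the problem to a transparent family. This reduction breaks down for arboricity: replacing each forest colour class of an optimal $\cal F$-$k$-colouring by a part and adding all edges between parts can strictly increase $\chi_{\cal F}$, because a new colour class may take vertices from several parts and thereby induce a cycle. Hence the extremal configurations are \emph{not} complete multipartite, and no off-the-shelf reduction isolates them. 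I would instead attack a counterexample minimal in $|V(G)|$ and then in the number of edges, via a forbidden-configuration analysis: rule out certain local patterns, and argue that the remaining highly structured graph always admits a size-$\le 3$ forest class satisfying (i)--(iii) above to peel off. Forcing the forest constraint and the list constraint to cohabit while keeping the arboricity drop at exactly one is, I expect, the crux of the whole argument.

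Finally, as sanity checks and to locate where the constant $3$ is tight, I would first settle two special cases: complete multipartite graphs, the presumed extremal family in which parts of size $3$ force the bound, and small values of $k$, where the size-$\le 3$ forest classes can be enumerated by hand. Success on these would both test the induction scheme and pin down the extremal examples witnessing that $3\chi_{\cal F}(G)$ cannot be replaced by $3\chi_{\cal F}(G)+1$.
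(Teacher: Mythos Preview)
The paper does not prove this statement. Conjecture~\ref{g2} is quoted from \cite{ZW} and, immediately after stating it, the paper says explicitly: ``This conjecture remains open.'' There is therefore no proof in the paper to compare your attempt against.

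What you have written is not a proof either, and you do not claim otherwise: it is a research plan modelled on the resolution of Ohba's conjecture. The plan is reasonable as a first approach, but you have already put your finger on the genuine gap. The Noel--Reed--Wu argument for Ohba's conjecture leans decisively on the reduction to complete multipartite graphs, and as you note, that reduction collapses here: blowing up the forest classes of an optimal ${\cal F}$-colouring to parts of a complete multipartite graph can raise $\chi_{\cal F}$, so the extremal instances are not captured. Without that reduction, your inductive ``peel off one colour class'' step is underdetermined: you require a set $W_c$ that simultaneously (i) induces a forest, (ii) has $|W_c|\ge 3$, (iii) drops $\chi_{\cal F}$ by \emph{exactly} one (not more, or the size bound $|V|\le 3\chi_{\cal F}$ fails after deletion), and (iv) contains every vertex whose list would otherwise become too short. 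You give no mechanism for producing such a $W_c$, and the forbidden-configuration analysis you gesture at is the entire difficulty, not a wrap-up step. In short, your proposal identifies the right analogy and the right obstacle, but it does not cross the obstacle; the conjecture remains open, in the paper and in your write-up alike.
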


This conjecture remains open. 

It is known \cite{DPS} that $ch(G)$ is bounded from above by a function of $ch_{\cal G}(G)$, provided that graphs in ${\cal G}$ have bounded maximum average degree. Or equivalently, graphs in $G$ have bounded choice number. In particular, $ch(G) \le f(ch_{\cal F}(G))$ for some function $f$. The function  $f$ found in \cite{DPS} is exponential.   
 Theorem \ref{thm-main} shows that $f$ cannot be a linear function. It would be interesting to know if there is a polynomial function $f$
 such that  $ch(G) \le f(ch_{\cal F}(G))$.

\begin{question}
\label{nq1}
Are there constant integers $a,b$ such that 
$$ch(G) \le a (ch_{\cal F}(G))^b?$$
If so, what is the smallest such integer $b$? 
\end{question}

It would also be interesting to know if the bound given in Theorem \ref{thm-main} is tight. I.e., is it true that $ch(G) \le ch_{\cal F}(G) ( ch_{\cal F}(G)+1)$ for all graphs $G$? 

As observed in the introduction, for any two graph classes ${\cal G}$ and ${\cal G}'$, $\chi_{\cal G}(G)  \le  (\max_{H \in {\cal G}'}\chi_{\cal G}(H)) \chi_{{\cal G}'}(G)$.  We are interested in the question whether the same inequality holds for the corresponding choice number. 
If  ${\cal G}'  \subseteq {\cal G}$, then trivially, the inequality $ch_{\cal G}(G)  \le  (\max_{H \in {\cal G}'} ch_{\cal G}(H)) ch_{{\cal G}'}(G) = ch_{{\cal G}'}(G)$ holds.
We do now know any non-trivial case where the inequality    $ch_{\cal G}(G)  \le  (\max_{H \in {\cal G}'} ch_{\cal G}(H)) ch_{{\cal G}'}(G)$ holds. 
As remarked in \cite{DPS}, the following question may have a positive answer.  

\begin{question} \cite{DPS}
	\label{q2} Is it true that for any graph $G$ and any positive integer $k$, $$ch(G) \le k ch_{{\cal G}_k}(G)?$$
\end{question}

Even the $k=2$ case of the above question is very interesting and challenging.   
  More generally, the following question seems to be natural and interesting:
 
 \begin{question}
 	\label{q3} Is it true that for any graph $G$ and any positive integers $k, k'$, $$ch_{{\cal G}_{k'}} (G) \le \left\lceil \frac k{k'} \right\rceil ch_{{\cal G}_k}(G)?$$
 \end{question}
 
 The relation between $ch_{\cal L}(G)$ and $ch_{\cal S}(G)$ is also interesting. By Theorem \ref{thm-main}, there are graphs $G$ for which 
 $$ch_{\cal L}(G) \ge ch_{{\cal D}_2}(G) \ge ch_{\cal S}(G) (ch_{\cal S}(G)+1).$$
 It follows from (\ref{eq1}) that 
 $$\chi_{\cal S}(G) \le 2 \chi_{\cal L}(G).$$
 The following questions remain open.

 \begin{question}
 	\label{q4} Is it true that for any graph $G$, $$ch_{\cal S} (G) \le 2  ch_{{\cal L}}(G)?$$
 \end{question}

 \begin{question}
 	\label{q4} Is it true that for any graph $G$, $$ch_{\cal L} (G) \le   ch_{{\cal S}}(G) (ch_{\cal S}(G)+1)?$$
 	Or is there an integer $a$ such that
 	$$ch_{\cal L} (G) \le   (ch_{\cal S}(G))^{a}?$$
 \end{question}

\end{document}